\documentclass{amsart}
\usepackage{amsfonts}

\setcounter{MaxMatrixCols}{10}

\newtheorem{theorem}{Theorem}
\theoremstyle{plain}

\newtheorem{remark}{Remark}

\numberwithin{equation}{section}
\input{tcilatex}

\begin{document}
\title[The Helix Relation Between Two Curves ]{The Helix Relation Between
Two Curves }
\author{\.{I}lkay ARSLAN G\"{U}VEN}
\address{ }
\email{}
\urladdr{}
\author{Yusuf YAYLI}
\curraddr{}
\email{ }
\urladdr{}
\date{}
\subjclass[2000]{53A04 }
\keywords{General helix, Frenet frame, Bertrand mates}

\begin{abstract}
In this study, we give the relation of being general helix and slant helix
of two curves by using the equation between them. Also we find some results
and express the characterizations of these curves.
\end{abstract}

\maketitle

\section{Introduction}

The characterization of general helix is given in Lancret's Theorem in 1802.
The general helix and the associated plane curve in the Euclidean 3-space $%
E^{3}$ were studied in \cite{Sy}. It was given clearly that how to convert
the associated plane curve to the general helix and vice versa. The
equations of the general helix and associated plane curve were given to make
this convertion. Also in \cite{Izu}, a similiar equation was given to show
that cylindrical helices can be constructed from plane curves.

In \cite{Guv}, the general helix and associated plane curve were studied in
Minkowski 3-space $E_{1}^{3}$. The new equations which say how to obtain the
general helix from the plane curve were given in $E_{1}^{3}$.

In this paper, we take the equation of two curves which is similiar to the
equation in \cite{Sy}. Then we calculate the Frenet vectors and axis of
symmetry of each curve and obtain the relation between them of how to being
general helix, slant helix and Bertrand mates.

\section{Preliminaries}

We now recall some basic notions about classical differential geometry of
space curves in Euclidean space $E^{3}$.

Let $\beta :I\longrightarrow \mathbb{R}^{3}$ be a curve with arc-length
parameter $s$ and let $\left\{ T,N,B\right\} $ denote the Frenet frame of $%
\beta $. $T(s)=\beta ^{\prime }(s)$ is called the \textit{unit tangent vector%
} of $\beta $ at $s$. The \textit{curvature} of $\beta $ is given by $\kappa
(s)=\left\Vert \beta ^{\prime \prime }(s)\right\Vert $. The \textit{unit
principal normal vector }$N(s)$ of $\beta $ at $s$ is given by $\beta
^{\prime \prime }(s)=\kappa (s).N(s)$. Also the unit vector $B(s)=T(s)\times
N(s)$ is called the \textit{unit binormal vector }of $\beta $ at $s$. Then
the famous Frenet formula holds as;%
\begin{eqnarray*}
T^{\prime }(s) &=&\kappa (s).N(s) \\
N^{\prime }(s) &=&-\kappa (s).T(s)+\tau (s)B(s) \\
B^{\prime }(s) &=&-\tau (s)N(s)
\end{eqnarray*}%
where $\tau (s)$ is the \textit{torsion} of $\beta $ at $s$.

Also the Frenet vectors of a curve $\alpha $, which is not given by
arc-length parameter can be calculated as;%
\begin{equation}
T(t)=\frac{\alpha ^{\prime }(t)}{\left\Vert \alpha ^{\prime }(t)\right\Vert }%
\text{ \ \ \ , \ \ \ \ }B(t)=\frac{\alpha ^{\prime }(t)\times \alpha
^{\prime \prime }(t)}{\left\Vert \alpha ^{\prime }(t)\times \alpha ^{\prime
\prime }(t)\right\Vert }\text{ \ \ \ , \ \ \ }N(t)=B(t)\times T(t).  \tag{1}
\end{equation}

A curve $\beta :I\longrightarrow \mathbb{R}^{3}$ is called a general helix
if its tangent line forms a constant angle with a fixed straight line. This
straight line is the axis of general helix. A classical result \ stated by
Lancret says that "a curve is a general helix if and only if \ the ratio of
the curvature to torsion is constant". If both curvature and torsion are
non-zero constant, it is of course a general helix which is called circular
helix.

A slant helix in $E^{3}$ is defined by the property that the principal
normal line makes a constant angle with a fixed direction. In \cite{Kula},
it is shown that $\alpha $ is a slant helix in $E^{3}$ if and only if the
geodesic curvature of the principal normal of the space curve $\alpha $ is a
constant function.

Let two curves be $\alpha $ and $\beta $ in $E^{3}$. They are called
Bertrand curves if their principal normal vectors are linearly dependent. We
say that $\alpha $ and $\beta $ are Bertrand mates.

\section{The Equation Of Two Curves}

Let $\overline{\alpha }$ be a curve and $\alpha $ be a unit speed general
helix in $E^{3}$. $s$, will denote arc-lengthparameter of $\alpha $. The
Frenet frame of $\overline{\alpha }$ and $\alpha $ are indicated by $\left\{ 
\overline{T},\overline{N},\overline{B}\right\} $ and $\left\{ T,N,B\right\} $%
, respectively. The curvatures of $\overline{\alpha }$ and $\alpha $ are $%
\overline{\kappa }$ and $\kappa $; the torsions are $\overline{\tau }$ and $%
\tau $. $a$ is the constant axis of general helix, $\theta $ is the angle
between $a$ and $T$. The axis is given by%
\begin{equation*}
a=\cos \theta T+\sin \theta B.
\end{equation*}

The equation between $\overline{\alpha }$ and $\alpha $ is denoted in \cite%
{Sy} as%
\begin{equation*}
\overline{\alpha }(s)=\alpha (s_{0})+\sin \theta .\alpha (s)+a.(s-s_{0})\cos
\theta
\end{equation*}%
where $\alpha (s_{0})$ and $s_{0}$ are arbitrary constant vector and point.

Now, let find the Frenet vectors $\left\{ \overline{T},\overline{N},%
\overline{B}\right\} $ of $\overline{\alpha }$. Since $s$ is not arc-length
parameter of $\overline{\alpha }$, we use the equations in (1).

The tangent vector $\overline{T}$ of $\overline{\alpha }$ is

\begin{equation*}
\overline{T}(s)=\frac{\sin \theta +\cos ^{2}\theta }{\sqrt{1+\cos \theta
.\sin 2\theta }}T(s)+\frac{\cos \theta .\sin \theta }{\sqrt{1+\cos \theta
.\sin 2\theta }}B(s),
\end{equation*}

\bigskip

the principal normal vector $\overline{N}$ of $\overline{\alpha }$ is

\begin{equation*}
\overline{N}(s)=\left[ \frac{\mu }{\sqrt{\lambda ^{2}+\mu ^{2}}}.\frac{\sin
\theta +\cos ^{2}\theta }{\sqrt{1+\cos \theta .\sin 2\theta }}-\frac{\lambda 
}{\sqrt{\lambda ^{2}+\mu ^{2}}}.\frac{\cos \theta .\sin \theta }{\sqrt{%
1+\cos \theta .\sin 2\theta }}\right] N(s)
\end{equation*}

\bigskip

where 
\begin{eqnarray*}
\lambda &=&\cos \theta .\sin ^{2}\theta +\cos ^{3}\theta .\sin \theta \\
\mu &=&(\sin \theta +\cos ^{2}\theta )\kappa -(\cos \theta .\sin ^{2}\theta
+\cos ^{3}\theta .\sin \theta )\tau .
\end{eqnarray*}%
Here if we say 
\begin{equation*}
\frac{\mu }{\sqrt{\lambda ^{2}+\mu ^{2}}}.\frac{\sin \theta +\cos ^{2}\theta 
}{\sqrt{1+\cos \theta .\sin 2\theta }}-\frac{\lambda }{\sqrt{\lambda
^{2}+\mu ^{2}}}.\frac{\cos \theta .\sin \theta }{\sqrt{1+\cos \theta .\sin
2\theta }}=c
\end{equation*}%
then we can denote the principal normal vectors as $\overline{N}(s)=c.N(s)$.

Also the binormal vector $\overline{B}$ of $\overline{\alpha }$ is found as

\begin{equation*}
\overline{B}(s)=\frac{\lambda }{\sqrt{\lambda ^{2}+\mu ^{2}}}T(s)+\frac{\mu 
}{\sqrt{\lambda ^{2}+\mu ^{2}}}B(s).
\end{equation*}

\bigskip

We will state the following theorems whose proofs will be done by these
calculations.

\begin{theorem}
Let a curve and a general helix be $\overline{\alpha }$ and $\alpha $,
respectively. The equation between them is given by $\overline{\alpha }%
(s)=\alpha (s_{0})+\sin \theta .\alpha (s)+a.(s-s_{0})\cos \theta $ \ where $%
a$ is the axis of general helix. If $\alpha $ is general helix then $%
\overline{\alpha }$ is a general helix.
\end{theorem}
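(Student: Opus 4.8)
The plan is to produce a fixed direction in $E^{3}$ that makes a constant angle with the tangent line of $\overline{\alpha }$; the natural candidate is the axis $a$ of $\alpha $ itself, and the entire argument turns on the single fact that $\theta $ is a \emph{constant}. Consequently every coefficient appearing in the formula
\[
\overline{T}(s)=\frac{\sin \theta +\cos ^{2}\theta }{\sqrt{1+\cos \theta .\sin 2\theta }}\,T(s)+\frac{\cos \theta .\sin \theta }{\sqrt{1+\cos \theta .\sin 2\theta }}\,B(s)
\]
obtained above is itself a constant; abbreviate this as $\overline{T}=p\,T+q\,B$ with $p,q$ not depending on $s$.

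First I would record that the axis $a=\cos \theta \,T+\sin \theta \,B$ is a genuine fixed unit vector of $E^{3}$: differentiating along $\alpha $ and using the Frenet formulas gives $a^{\prime }=(\kappa \cos \theta -\tau \sin \theta )N$, and this vanishes identically precisely because $\alpha $, being a general helix, satisfies Lancret's relation $\kappa /\tau =\tan \theta $. (Equivalently, one simply invokes the fact, already recorded in the setup, that $a$ is the constant axis of the general helix.)

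Next I would compute the inner product of the unit tangent $\overline{T}$ with this fixed unit vector, using the orthonormality of $\{T,N,B\}$:
\[
\left\langle \overline{T}(s),\,a\right\rangle =\left\langle p\,T+q\,B,\ \cos \theta \,T+\sin \theta \,B\right\rangle =p\cos \theta +q\sin \theta .
\]
Since $p$, $q$ and $\theta $ are all constant, the right-hand side is a constant independent of $s$. Hence the tangent line of $\overline{\alpha }$ makes a constant angle with the fixed straight line spanned by $a$, which is exactly the defining property of a general helix; therefore $\overline{\alpha }$ is a general helix, in fact with the same axis $a$ as $\alpha $.

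I do not expect a genuine obstacle here: once one notices that the coefficients are constant, the verification is a one-line computation. The only points requiring a word of care are the standing regularity hypotheses that make the Frenet apparatus of $\overline{\alpha }$ meaningful — namely $1+\cos \theta .\sin 2\theta >0$, so that $\overline{T}$ is well defined, and $\overline{\kappa }\neq 0$ — together with the step invoking Lancret's theorem for $\alpha $ to see that $a$ is constant. If one prefers an argument phrased through Lancret's ratio directly, one may instead read off $\overline{\kappa }$ and $\overline{\tau }$ from the Frenet data collected in the preliminaries and check that the quotient $\overline{\kappa }/\overline{\tau }$ reduces to a function of $\theta $ alone; but the fixed-angle argument above is the shortest route.
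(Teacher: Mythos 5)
Your proposal is correct and follows essentially the same route as the paper: both arguments show $\langle a,\overline{T}\rangle = p\cos \theta + q\sin \theta$ is constant (the paper writes this sum out explicitly) and conclude that $\overline{\alpha }$ is a general helix with the fixed direction $a$. Your extra remarks — verifying $a^{\prime }=0$ via Lancret and flagging the regularity condition $1+\cos \theta .\sin 2\theta >0$ — only make the same argument slightly more careful.
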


\begin{proof}
Let the curve $\alpha $ be a general helix. The tangent and binormal vectors 
$T$ and $B$ of $\alpha $ make constant angle with a constant vector which
can be $a$, the axis of $\alpha $.

\begin{eqnarray*}
\langle a,T\rangle &=&\cos \theta \\
\langle a,B\rangle &=&\sin \theta
\end{eqnarray*}
Since the tangent vector $\overline{T}$ of $\overline{\alpha }$ depends on $%
T $ and $B$, it also make constant angle with that constant vector.

\begin{equation*}
\langle a,\overline{T}\rangle =\frac{\cos \theta .\sin \theta +\cos
^{3}\theta }{\sqrt{1+\cos \theta .\sin 2\theta }}+\frac{\cos \theta .\sin
^{2}\theta }{\sqrt{1+\cos \theta .\sin 2\theta }}
\end{equation*}%
So $\overline{\alpha }$ is a general helix.
\end{proof}

\begin{theorem}
Let a curve and a general helix be $\overline{\alpha }$ and $\alpha $,
respectively. The equation between them is given by $\overline{\alpha }%
(s)=\alpha (s_{0})+\sin \theta .\alpha (s)+a(s-s_{0})\cos \theta $ \ where $%
a $ is the axis of general helix. If the number $c$ between the principal
normal vectors of $\overline{\alpha }$ and $\alpha $ is constant, then $%
\alpha $ is slant helix if and only if $\overline{\alpha }$ is a slant helix.
\end{theorem}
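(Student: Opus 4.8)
The plan is to exploit the very clean relation $\overline{N}(s) = c\, N(s)$ between the principal normals together with the slant-helix characterization quoted in the Preliminaries from \cite{Kula}: a curve is a slant helix if and only if the geodesic curvature of its principal normal (image, on the unit sphere) is a constant function. So the strategy is: express the geodesic curvature $\overline{\sigma}$ of the principal normal indicatrix of $\overline{\alpha}$ in terms of the geodesic curvature $\sigma$ of the principal normal indicatrix of $\alpha$, and show that constancy of the hypothesized constant $c$ forces $\overline{\sigma}$ to be a (constant multiple, or otherwise explicit function, of) $\sigma$, so that one is constant exactly when the other is.

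First I would recall the explicit formula for the geodesic curvature of the principal normal spherical image. If $\alpha$ has curvature $\kappa$ and torsion $\tau$, the standard expression (as in \cite{Kula}) is
\begin{equation*}
\sigma = \frac{\kappa^{2}}{(\kappa^{2}+\tau^{2})^{3/2}}\left(\frac{\tau}{\kappa}\right)^{\prime},
\end{equation*}
up to the normalization used there; $\alpha$ is a slant helix iff $\sigma$ is constant. Next I would compute the analogue $\overline{\sigma}$ for $\overline{\alpha}$. Here the key inputs are already available from the calculations preceding Theorem 1: the curve $\overline{\alpha}$ is parametrized by $s$ (not arc length), with $\|\overline{\alpha}'(s)\| = \sqrt{1+\cos\theta\sin 2\theta}$ a constant, and $\overline{N}(s) = c\,N(s)$. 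Differentiating $\overline{N}(s)=c\,N(s)$ and using that $\overline{\alpha}$ has constant speed, one gets $\overline{N}'(s) = c'\,N(s) + c\,N'(s)$; when $c$ is constant this collapses to $\overline{N}' = c\,N'$, so the principal normal indicatrix of $\overline{\alpha}$ is, up to the constant scaling by $c$ and the constant reparametrization by speed $\|\overline{\alpha}'\|$, a scaled copy of that of $\alpha$. Since geodesic curvature of a curve on a sphere is invariant under rotation and scales in a controlled way under a homothety of the ambient sphere — and here the relevant curve lies on the same unit sphere because $|\overline{N}| = |N| = 1$ forces $|c| = 1$ — the two geodesic curvatures differ only by a constant factor (in fact by a constant arising from the speed ratio). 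Hence $\overline{\sigma}$ is constant iff $\sigma$ is constant, which by the \cite{Kula} characterization gives the equivalence.

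More concretely, I would carry out the steps in this order: (i) write $\overline{T}, \overline{N}, \overline{B}$ and compute $\overline{\kappa}, \overline{\tau}$ from the Frenet apparatus already displayed, obtaining them as explicit functions of $\kappa, \tau, \theta$ — the ratio $\overline{\tau}/\overline{\kappa}$ is the real target; (ii) substitute into the geodesic-curvature formula to get $\overline{\sigma}$ in terms of $\kappa,\tau,\theta$; (iii) using the constancy of $c$ (and the already-known constancy of $\langle a,\overline T\rangle$ from Theorem 1, i.e. $\overline\alpha$ is automatically a general helix when $\alpha$ is — wait, that is for the general-helix case; here $\alpha$ need not be, but $c$ constant is the standing hypothesis), algebraically reduce $\overline{\sigma}$ to $(\text{nonzero constant}) \cdot \sigma + (\text{possibly a constant})$, or more likely to a fixed monotone function of $\sigma$; (iv) conclude the iff.

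**The main obstacle** I anticipate is step (ii)–(iii): honestly computing $\overline{\kappa}$ and $\overline{\tau}$ (hence $\overline{\tau}/\overline{\kappa}$) from the given $\overline{T},\overline{N},\overline{B}$ and then simplifying the resulting geodesic curvature is a substantial algebraic grind, because $\lambda,\mu$ and the normalizing radicals all depend on $\kappa,\tau$ through $\mu$. The conceptual shortcut that should tame this is to avoid $\overline{\kappa},\overline{\tau}$ altogether and instead argue directly on the indicatrix: since $\overline{N}(s) = c\,N(s)$ with $c$ constant and $\overline{\alpha}$ has constant speed $L := \sqrt{1+\cos\theta\sin2\theta}$, the unit-speed reparametrization relates the two principal-normal curves by an orientation-preserving similarity of $\mathbb{R}^3$ composed with an affine change of parameter with constant Jacobian $1/L$; geodesic curvature on the unit sphere is unchanged by the first and multiplied by the constant $L$ (or $1/L$) by the second, so $\overline{\sigma} = L\,\sigma$ (or $\sigma/L$), and constancy transfers immediately. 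This reduces the proof to a short invariance argument plus citing \cite{Kula}, and I would present it that way, keeping the explicit $\overline{\kappa},\overline{\tau}$ computation only as an optional verification.
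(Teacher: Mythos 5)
Your proposal is correct in substance but takes a genuinely different route from the paper. The paper's proof is purely definitional and very short: from $\overline{N}(s)=c\,N(s)$ with $c$ constant, the principal normals of the two curves are linearly dependent, so $\overline{N}$ makes a constant angle with a fixed direction exactly when $N$ does, and that is the definition of a slant helix; no curvature or torsion computation and no appeal to \cite{Kula} is needed. You instead route the argument through the Kula--Yayli characterization by the geodesic curvature of the principal normal indicatrix, which also works: as you observe, $|\overline{N}|=|N|=1$ forces $|c|=1$, so the two indicatrices coincide up to the antipodal isometry of the unit sphere, hence their geodesic curvatures agree up to sign and constancy transfers. One correction to your invariance step: the geodesic curvature of a spherical curve is taken with respect to the arc length of the indicatrix itself, so it is invariant under reparametrization and is \emph{not} multiplied by the constant speed $L=\sqrt{1+\cos\theta\sin 2\theta}$ of $\overline{\alpha}$; the right statement is $\overline{\sigma}=\pm\sigma$ rather than $\overline{\sigma}=L\sigma$. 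This slip is harmless for your conclusion (a constant multiple of a constant is still constant), and your decision to drop the explicit computation of $\overline{\kappa},\overline{\tau}$ in steps (i)--(iii) is wise, since the invariance argument makes it unnecessary. In comparison, the paper's definitional argument is the more elementary and shorter one, while your route additionally exhibits the quantitative fact that $\overline{\alpha}$ and $\alpha$ share (up to antipodal symmetry) the same principal-normal indicatrix and hence the same geodesic-curvature function.
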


\begin{proof}
The relation of principal normal vectors $\overline{N}$ and $N$ of $%
\overline{\alpha }$ and $\alpha $ is calculated as;%
\begin{equation*}
\overline{N}(s)=\left[ \frac{\mu }{\sqrt{\lambda ^{2}+\mu ^{2}}}.\frac{\sin
\theta +\cos ^{2}\theta }{\sqrt{1+\cos \theta .\sin 2\theta }}-\frac{\lambda 
}{\sqrt{\lambda ^{2}+\mu ^{2}}}.\frac{\cos \theta .\sin \theta }{\sqrt{%
1+\cos \theta .\sin 2\theta }}\right] N(s)
\end{equation*}%
which we denoted by $\overline{N}(s)=c.N(s)$.

Here if $c$ is constant then $\overline{N}$ and $N$ are linearly dependent.

Firstly let $\alpha $ be a slant helix.Then the principal normal vector of $%
\alpha $ makes a constant angle with a fixed direction. Since the principal
normal vector $\overline{N}$ is linearly dependent with $N$, $\overline{N}$
also makes a constant angle with that fixed direction. So $\overline{\alpha }
$ is a slant helix.

The opposite acceptance can be done in the same way.
\end{proof}

\begin{remark}
The number $c$ between the principal normal vectors was taken constant in
the theorem. This number $c$ is constant under the condition of $\kappa $
and $\tau $ are constant. Thus $\alpha $ is a circular helix. If $\alpha $
is a circular helix, then $\alpha $ is slant helix if and only if $\overline{%
\alpha }$ is a slant helix.
\end{remark}

\begin{theorem}
Let a curve and a general helix be $\overline{\alpha }$ and $\alpha $,
respectively. The equation between them is given by $\overline{\alpha }%
(s)=\alpha (s_{0})+\sin \theta .\alpha (s)+a(s-s_{0})\cos \theta $ \ where $%
a $ is the axis of general helix. If the number $c$ between the principal
normal vectors of $\overline{\alpha }$ and $\alpha $ is constant, then $%
\overline{\alpha }$ and $\alpha $ are Bertrand mates.
\end{theorem}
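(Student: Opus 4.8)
The plan is to invoke directly the definition of Bertrand mates recorded in the Preliminaries: two curves are Bertrand mates precisely when their principal normal vector fields are linearly dependent at corresponding parameter values. All of the substantive computation has already been carried out in Section 3, where the principal normal of $\overline{\alpha }$ was shown to satisfy $\overline{N}(s)=c\,N(s)$ with
\[
c=\frac{\mu }{\sqrt{\lambda ^{2}+\mu ^{2}}}\cdot \frac{\sin \theta +\cos ^{2}\theta }{\sqrt{1+\cos \theta .\sin 2\theta }}-\frac{\lambda }{\sqrt{\lambda ^{2}+\mu ^{2}}}\cdot \frac{\cos \theta .\sin \theta }{\sqrt{1+\cos \theta .\sin 2\theta }},
\]
so the claim should follow in essentially one line.

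First I would recall the relation $\overline{N}(s)=c\,N(s)$ and note that the hypothesis is exactly that the scalar function $s\mapsto c(s)$ is constant, say $c(s)\equiv c_{0}$. Then at every parameter value $s$ the vectors $\overline{N}(s)$ and $N(s)$ are proportional, hence linearly dependent, which is by definition what it means for $\overline{\alpha }$ and $\alpha $ to be Bertrand mates. I would also add, as a consistency check, that since $\overline{N}$ and $N$ are both unit vectors one must have $|c_{0}|=1$; in particular $c_{0}\neq 0$, so $\overline{N}$ is genuinely the principal normal of $\overline{\alpha }$ and not a degenerate vector, which disposes of the edge case in which $\lambda $ and $\mu $ make the bracket vanish.

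The ``difficulty,'' such as it is, does not lie in this theorem but in the earlier derivation of the formula $\overline{N}=cN$; granting that, the only point worth elaborating is the meaning of the hypothesis. As observed in the Remark following the slant-helix theorem, $c$ is constant precisely when $\kappa $ and $\tau $ are constant, i.e. when $\alpha $ is a circular helix, so the statement amounts to saying that a circular helix and the curve $\overline{\alpha }$ produced from it by the given equation are always Bertrand mates. I would close by making this restriction explicit, so that the reader sees the hypothesis is not vacuous.
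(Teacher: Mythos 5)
Your proposal is correct and coincides with the paper's own proof: both arguments simply observe that the hypothesis makes $\overline{N}(s)=c\,N(s)$ with $c$ constant, so the principal normals are linearly dependent and the curves are Bertrand mates by definition. Your additional remarks (that $|c|=1$ and that constancy of $c$ forces $\alpha$ to be a circular helix) are consistent with the Remark the paper places immediately after this theorem.
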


\begin{proof}
In the equation $\overline{N}(s)=c.N(s)$ , let $c$ be a constant number,
then the principal normal vectors of $\overline{\alpha }$ and $\alpha $ are
linearly dependent. So $\overline{\alpha }$ and $\alpha $ are Bertrand
curves.
\end{proof}

\begin{remark}
When $c$ is taken as a constant, then $\alpha $ is a circular helix. If $%
\alpha $ is a circular helix, then $\ \overline{\alpha }$ and $\alpha $ are
Bertrand mates.
\end{remark}

Now we will give an example .

\textbf{Example: \ }Let $\alpha (s)=(6s,3s^{2},s^{3})$ be a general helix
with the curvature and torsion;%
\begin{equation*}
\kappa (s)=\frac{2}{3(s^{2}+2)}\text{ \ \ \ \ \ , \ \ \ \ \ \ }\tau (s)=%
\frac{2}{3(s^{2}+2)}.
\end{equation*}%
The axis of $\alpha $ is calculated by $a(s)=\cos \theta T(s)+\sin \theta
B(s)$. Here the angle $\theta ,$ between $a$ and $T$ is 
\begin{equation*}
\frac{\kappa (s)}{\tau (s)}=\tan \theta =1\text{ \ \ }\Longrightarrow \text{
\ \ }\theta =\frac{\pi }{4}.
\end{equation*}%
If the vectors $\left\{ T,N,B\right\} $ are Frenet vectors of $\alpha $,
then 
\begin{eqnarray*}
T(s) &=&\left( \frac{2}{s^{2}+2},\frac{2s}{s^{2}+2},\frac{s^{2}}{s^{2}+2}%
\right) \\
N(s) &=&\left( \frac{-2s^{3}-4s}{(s^{2}+2)^{2}},\frac{s^{4}-4s^{2}-8}{%
(s^{2}+2)^{2}},\frac{2s^{3}+4s}{(s^{2}+2)^{2}}\right) \\
B(s) &=&\left( \frac{s^{2}}{s^{2}+2},\frac{-2s}{s^{2}+2},\frac{2}{s^{2}+2}%
\right) .
\end{eqnarray*}%
So the axis $a(s)$ is found as%
\begin{equation*}
a(s)=\left( \frac{2\sqrt{2}+\sqrt{2}s^{2}}{s^{2}+2},0,\frac{2\sqrt{2}+\sqrt{2%
}s^{2}}{s^{2}+2}\right) .
\end{equation*}%
The curve $\overline{\alpha }(s)$ is 
\begin{equation*}
\overline{\alpha }(s)=\left( \frac{(3\sqrt{2}+1)s^{3}+(6\sqrt{2}+2)s}{s^{2}+2%
},\frac{3\sqrt{2}}{2}s^{2},\frac{\frac{\sqrt{2}}{2}s^{5}+(\sqrt{2}+1)s^{3}+2s%
}{s^{2}+2}\right)
\end{equation*}%
by taking the arbitrary constant vector $\alpha (s_{0})=\overrightarrow{0}$
and arbitrary point $s_{0}=0.$

Then the Frenet vectors of $\overline{\alpha }$ are calculated by using $%
T,N,B;$%
\begin{eqnarray*}
\overline{T}(s) &=&\frac{1}{\sqrt{4+2\sqrt{2}}}\left( 1+\frac{2\sqrt{2}}{%
s^{2}+2},\frac{2\sqrt{2}s}{s^{2}+2},1+\frac{\sqrt{2}s^{2}}{s^{2}+2}\right) \\
&& \\
\overline{N}(s) &=&\frac{4+2\sqrt{2}-3(s^{2}+2)^{2}}{\sqrt{4+2\sqrt{2}}.%
\sqrt{9(s^{2}+2)^{4}+8}}.\left( \frac{-2s^{3}-4s}{(s^{2}+2)^{2}},\frac{%
s^{4}-4s^{2}-8}{(s^{2}+2)^{2}},\frac{2s^{3}+4s}{(s^{2}+2)^{2}}\right) \\
&& \\
\overline{B}(s) &=&\frac{1}{\sqrt{9(s^{2}+2)^{4}+8}}\left( 6(s^{2}+2)+\frac{2%
\sqrt{2}s^{2}}{s^{2}+2},6s(s^{2}+2)-\frac{4\sqrt{2}s}{s^{2}+2}%
,3s^{2}(s^{2}+2)-\frac{4\sqrt{2}}{s^{2}+2}\right) .
\end{eqnarray*}

\bigskip

\.{I}lkay Arslan G\"{u}ven

University of Gaziantep

Department of Mathematics

\c{S}ehitkamil, 27310, Gaziantep, Turkey

E-mail: iarslan@gantep.edu.tr, ilkayarslan81@hotmail.com

\bigskip

Yusuf Yayl\i

Ankara University

Department of Mathematics

Ankara, Turkey

E-mails: Yusuf.Yayli@science.ankara.edu.tr


\begin{thebibliography}{9}
\bibitem{Bar} M. Barros, General helices and a theorem of Lancret, Pro. of
the Amer. Math. Soc., 125-5 (1997),1503-1509.

\bibitem{Guv} \.{I}.A. G\"{u}ven, S. Kaya, Y. Yayli, General helix and
associated plane curve in Minkowski 3-space, Far East Journal of Math.
Sciences, 47-2 (2010), 225-233.

\bibitem{Izu} S. Izumiya and N. Takeuchi, Generic properties of helices and
Bertrand curves, J. of Geometry, 74 (2002), 97-109.

\bibitem{Kula} L. Kula and Y. Yayli, On slant helix and its spherical
indicatrix, Applied Mathematics and Computation, 169 (2005), 600-607.

\bibitem{Kuh} W. K\"{u}hnel, Differential Geometry, second ed.,Am. Math.
Soc., 2006.

\bibitem{Struik} D.J. Struik, Lectures on Classical Differential Geometry,
Dover, New York, 1988.

\bibitem{Sy} S. Sy, General helices and other topics in the differential
geometry of curves, Master of Sci. in Mathematics, Michigan Technological
Uni., 2001.
\end{thebibliography}
\end{document}